\numberwithin{equation}{section}
\theoremstyle{definition}
\newtheorem{thm}{Theorem}[section]
\newtheorem{lem}[thm]{Lemma}
\newtheorem{cor}[thm]{Corollary}
\newtheorem{prop}[thm]{Proposition}
\theoremstyle{definition}
\newtheorem{rem}[thm]{Remark}
\newtheorem{defn}[thm]{Definition}
\def\Q{{\mathbb Q}}
\def\Z{{\mathbb Z}}
\def\O{{\mathscr O}}
\def\P{{\mathbb P}}
\def\Br{\mathop{\mathrm{Br}}\nolimits}
\def\Gal{\mathop{\mathrm{Gal}}\nolimits}
\def\Jac{\mathop{\mathrm{Jac}}\nolimits}
\def\GL{\mathop{\mathrm{GL}}\nolimits}
\def\Pic{\mathop{\mathrm{Pic}}\nolimits}
\def\deg{\mathop{\text{\rm deg}}\nolimits}
\def\chara{\mathop{\mathrm{char}}}
\newcommand{\transp}[1]{{}^{t}\!{#1}}
\begin{document}
\title[Symmetric determinantal representations of the Fermat curves]
{On the symmetric determinantal representations of the Fermat curves of prime degree}
\author{Yasuhiro Ishitsuka}
\address{Department of Mathematics, Faculty of Science, Kyoto University, Kyoto 606-8502, Japan}
\email{yasu-ishi@math.kyoto-u.ac.jp}
%\thanks{}
\author{Tetsushi Ito}
\address{Department of Mathematics, Faculty of Science, Kyoto University, Kyoto 606-8502, Japan}
\email{tetsushi@math.kyoto-u.ac.jp}
%\thanks{}

\date{\today}
\subjclass[2010]{Primary 11D41; Secondary 14H50, 14K15, 14K30}
% 11D41  Higher degree equations; Fermat's equation
% 14F22  Brauer groups of schemes
% 14G17  Positive characteristic ground fields
% 14H50  Plane and space curves
% 14K15  Arithmetic ground fields
% 14K30  Picard schemes, higher Jacobians

\keywords{plane curve, Fermat curves, determinantal representation, theta characteristic}

\maketitle

\begin{abstract}
We prove that the defining equations of the Fermat curves of prime degree
cannot be written as the determinant of symmetric matrices with entries in linear forms
in three variables with rational coefficients.
In the proof, we use a relation between symmetric matrices with entries in linear forms
and non-effective theta characteristics on smooth plane curves.
We also use some results of Gross-Rohrlich on the rational torsion points
on the Jacobian varieties of the Fermat curves of prime degree.
\end{abstract}

% \tableofcontents

\section{Introduction}

Let us consider the following Diophantine problem motivated by
the Arithmetic Invariant Theory of
symmetrized $3 \times n \times n$ boxes (\cite{BhargavaGrossWang}, \cite{Ho}):
for a smooth plane curve $C \subset \P^2_K$ of degree $n \geq 1$ defined over a field $K$,
does there exist a triple of symmetric matrices $(M_0,M_1,M_2)$ of size $n$
with entries in $K$ such that $C$ is defined by the equation of the form
\[ \det\big( X_0 M_0 + X_1 M_1 + X_2 M_2 \big) = 0 \ ? \]
If $C$ is defined by the above equation,
we say $C$ admits a {\em symmetric determinantal representation} over $K$.
We say two symmetric determinantal representations of $C$ defined by
triples $(M_0,M_1,M_2)$,\ $(M'_0,M'_1,M'_2)$ are {\em equivalent}
if there are $P \in \GL_n(K)$ and $a \in K^{\times}$
with $M_i' = a \, \transp{P} M_i P$ for $i = 0,1,2$,
where $\transp{P}$ is the transpose of the matrix $P$.
Note that equivalent triples give the same plane curve because
we have
\begin{align*}
  &\hspace*{-1.2cm} \det\big( X_0 M'_0 + X_1 M'_1 + X_2 M'_2 \big) \\
  &= \det\big(X_0 (a \, \transp{P} M_0 P) + X_1 (a \, \transp{P} M_1 P) + X_2 (a \, \transp{P} M_2 P) \big) \\
  &= a^n \cdot (\det P)^2 \cdot \det\big( X_0 M_0 + X_1 M_1 + X_2 M_2 \big).
\end{align*}

Finding symmetric determinantal representations of plane curves
is a classical problem in algebraic geometry.
For the history of this problem and known results,
see \cite{Dixon}, \cite{Edge1}, \cite{CookThomas}, \cite{Vinnikov}, \cite{BeauvilleDeterminantal},
\cite[Ch 4]{Dolgachev}.
When $K$ is algebraically closed of characteristic zero,
all plane curves (including singular ones) admit symmetric determinantal representations
(\cite[Remark 4.4]{BeauvilleDeterminantal}).
When $K$ is not algebraically closed,
many plane curves do not admit symmetric determinantal representations over $K$.
In \cite{IshitsukaIto1}, \cite{IshitsukaIto2},
we studied the local-global principle
for the existence of symmetric determinantal representations over global fields.

In this paper, we study symmetric determinantal representations of
the Fermat curves of prime degree and the Klein quartic over
the field $\Q$ of rational numbers.

We prove that the Fermat curves of prime degree
do {\em not} admit symmetric determinantal representations over $\Q$.

\begin{thm}
\label{MainTheorem}
Let $p \geq 2$ be a prime number.
The Fermat curve of degree $p$
\[ F_p := \big( X_0^p + X_1^p + X_2^p = 0 \big) \subset \P^2_{\Q} \]
does not admit a symmetric determinantal representation over $\Q$.
\end{thm}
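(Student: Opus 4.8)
The plan is to recast the problem in terms of theta characteristics and then to eliminate the surviving candidates using the arithmetic of the Jacobian. For the non-existence direction I only need the easy half of the correspondence alluded to in the abstract: if $F_p$ admits a symmetric determinantal representation $M(X)=X_0M_0+X_1M_1+X_2M_2$ over $\Q$, then the cokernel of $M(X)\colon \O_{\P^2}(-1)^{\oplus p}\to\O_{\P^2}^{\oplus p}$ is a sheaf supported on $F_p$ which, after the standard normalization and using the symmetry of the $M_i$, is a theta characteristic $\theta$ (so $\theta^{\otimes 2}\cong\omega_{F_p}$) defined over $\Q$ with $H^0(F_p,\theta)=0$. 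Hence it suffices to show that $F_p$ carries no non-effective theta characteristic over $\Q$; equivalently, that every theta characteristic on $F_p$ defined over $\Q$ is effective.

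Assume first that $p$ is odd. Then $F_p$ has the rational points $(1:-1:0)$, $(1:0:-1)$, $(0:1:-1)$, so the map $\Pic(F_p)(\Q)\to \Pic(F_{p,\overline{\Q}})^{\Gal(\overline{\Q}/\Q)}$ is an isomorphism (the obstruction in $\Br(\Q)$ is killed by a rational point), and rational theta characteristics coincide with Galois-invariant ones. By adjunction $\omega_{F_p}\cong\O_{F_p}(p-3)$, so $\theta_0:=\O_{F_p}(\tfrac{p-3}{2})$ is a theta characteristic defined over $\Q$, and the exact sequence $0\to\O_{\P^2}(\tfrac{p-3}{2}-p)\to\O_{\P^2}(\tfrac{p-3}{2})\to\O_{F_p}(\tfrac{p-3}{2})\to 0$ gives $H^0(F_p,\theta_0)=\binom{(p+1)/2}{2}\ge 1$, so $\theta_0$ is effective. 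Since the theta characteristics form a torsor under $J_{F_p}[2]$ and $\theta_0$ is Galois-invariant, every theta characteristic over $\Q$ has the shape $\theta_0\otimes\epsilon$ with $\epsilon\in J_{F_p}[2](\Q)$.

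It remains to show that $\theta_0$ is the only one, i.e.\ that $J_{F_p}[2](\Q)=0$. Here I would invoke the results of Gross--Rohrlich on the torsion of the Jacobian of the Fermat curve: the rational torsion subgroup is generated by the cuspidal classes coming from the three rational points above and contains no nontrivial $2$-torsion, whence $J_{F_p}[2](\Q)=0$. Then $\theta_0$ is the unique theta characteristic over $\Q$ and it is effective, contradicting the conclusion of the first step; this settles the odd case. For $p=2$ the curve $F_2=(X_0^2+X_1^2+X_2^2=0)$ is a smooth conic with no real, hence no rational, point; a size-$2$ symmetric representation would write the quadratic form as $\det\begin{pmatrix}\ell_0&\ell_1\\ \ell_1&\ell_2\end{pmatrix}$ and force the quaternion algebra $(-1,-1)_{\Q}$ to split, which it does not, so $F_2$ admits no representation either.

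The geometric reduction in the first two steps is formal, and the computation that $\theta_0$ is effective is routine. The main obstacle is the arithmetic input: pinning down $J_{F_p}[2](\Q)$ precisely --- equivalently, ruling out that some twist of $\theta_0$ by a rational $2$-torsion class is non-effective --- is exactly the delicate point, and it is here that the torsion computations of Gross--Rohrlich are indispensable.
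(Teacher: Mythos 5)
Your overall strategy is the same as the paper's: reduce the problem to the non-existence of non-effective theta characteristics on $F_p$ over $\Q$, use the fact that $\O_{F_p}\big((p-3)/2\big)$ is an effective theta characteristic so that any other rational theta characteristic is a twist by a class in $\Jac(F_p)[2](\Q)$, and then control that $2$-torsion group via Gross--Rohrlich. The $p=2$ case via the quaternion algebra $(-1,-1)_{\Q}$ is a correct variant of the paper's argument (which instead cites the equivalence, for conics, between having a rational point and admitting a symmetric determinantal representation).

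However, there is a genuine gap at the key arithmetic step: your claim that the rational torsion of $\Jac(F_p)$ ``contains no nontrivial $2$-torsion, whence $J_{F_p}[2](\Q)=0$'' is \emph{false} for $p=7$. Gross--Rohrlich's theorem gives $\Jac(C_s)(\Q)_{\mathrm{tors}} \cong \Z/7\Z \times \Z/2\Z$ for $(p,s)=(7,2),(7,4)$, and via Faddeev's isogeny $\prod_s \Jac(C_s) \to \Jac(F_7)$ (of degree a power of $7$, hence inducing an isomorphism on prime-to-$7$ torsion) one gets $\Jac(F_7)[2](\Q) \cong (\Z/2\Z)^2$. So for $p=7$ there are four rational theta characteristics, namely $\O_{F_7}(2)$ and its three twists by the nontrivial rational $2$-torsion classes, and your argument does not rule out that one of the twists is non-effective. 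The paper devotes an entire section to exactly this point: it takes the explicit Gross--Rohrlich divisor $D$ representing a nontrivial class of $\Jac(F_7)[2](\Q)$, observes that the three nontrivial classes are permuted by the order-$3$ automorphism $\sigma(X_0,X_1,X_2)=(X_1,X_2,X_0)$, and then shows by a direct computation that $D + 2(\ell_\infty \cap F_7)$ is an effective divisor, so that $\O_{F_7}(2)\otimes\O_{F_7}(D)$ (hence each of the three twists) is an \emph{effective} theta characteristic. Without this supplementary computation your proof is incomplete for $p=7$; for all other primes the argument goes through as you describe.
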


\begin{rem}
Theorem \ref{MainTheorem} can be rephrased in concrete terms as follows:
there do {\em not} exist symmetric matrices $M_0,M_1,M_2$ of size $p$
with entries in $\Q$
and $a \in \Q^{\times}$ satisfying
\[ X_0^p + X_1^p + X_2^p = a \cdot \det\big( X_0 M_0 + X_1 M_1 + X_2 M_2 \big). \]
Since it is a Diophantine problem with $3 p^2 + 1$ variables,
it seems difficult to prove the non-existence of solutions in $\Q$ directly.
We shall prove Theorem \ref{MainTheorem} using the methods and the results from algebraic geometry.
\end{rem}

\begin{rem}
The Fermat curves are sometimes defined by the equation $X_0^p + X_1^p = X_2^p$
instead of $X_0^p + X_1^p + X_2^p = 0$.
There is no essential difference when $p$ is odd.
But there is a difference when $p = 2$.
In fact, the smooth conic $(X_0^2 + X_1^2 = X_2^2)$ admits
a symmetric determinantal representation over $\Q$.
(See Remark \ref{RemarkConicWithRationalPoints}.)
\end{rem}

The strategy of the proof of Theorem \ref{MainTheorem} is as follows.
The case of $p = 2$ is easy and treated separately.
Let $p \geq 3$ be an odd prime number.
For a smooth plane curve $C \subset \P^2_{\Q}$ over $\Q$,
triples of symmetric matrices giving rise to
symmetric determinantal representations correspond to certain line bundles on $C$ called
{\em non-effective theta characteristics}
(Proposition \ref{Proposition:ExistenceCriterion}).
Therefore, in order to prove Theorem \ref{MainTheorem},
we have to prove the non-existence of non-effective theta characteristics on $F_p$ over $\Q$.
If $\mathscr{L}$ is a non-effective theta characteristic on $F_p$ over $\Q$,
the line bundle $\mathscr{L} \otimes \O_{F_p}\big( (-p+3)/2 \big)$ gives
a non-trivial $\Q$-rational $2$-torsion point on the Jacobian variety $\Jac(F_p)$.
For an integer $s$ with $1 \leq s \leq p-2$,
let $C_s$ be the projective smooth model of the affine curve
\[ V^p = U (1 - U)^s. \]
Gross-Rohrlich calculated the $\Q$-rational torsion points on $\Jac(C_s)$ (\cite{GrossRohrlich}).
There is an isogeny
\[ \prod_{1 \leq s \leq p-2} \Jac(C_s) \longrightarrow \Jac(F_p) \]
defined over $\Q$ whose degree is a power of $p$.
We can calculate the $\Q$-rational $2$-torsion points on $\Jac(F_p)$
(Corollary \ref{Corollary:GrossRohrlich}).
When $p \neq 7$,
we have the non-existence of non-effective theta characteristics on $F_p$ over $\Q$.
The case of $p=7$ requires a little care because there are three non-trivial
$\Q$-rational $2$-torsion points on $\Jac(F_7)$.
In fact, we have
\[ \Jac(F_7)[2](\Q) \cong (\Z/2\Z)^2. \]
Fortunately, by explicit calculation,
we can prove these $2$-torsion points correspond to {\em effective} theta characteristics on $F_7$
over $\Q$.
Hence non-effective theta characteristics on $F_7$ over $\Q$ do not exist.
(The calculation of $\Q$-rational $p$-torsion points on $\Jac(F_p)$
is more subtle (\cite{Tzermias1}, \cite{Tzermias2}, \cite{Tzermias3}, \cite{Tzermias4}).
We do not need these results in this paper.)

Along the proof of Theorem \ref{MainTheorem},
we also study symmetric determinantal representations of the {\em Klein quartic}
\[ C_{\mathrm{Kl}} := \big( X_0^3 X_1 + X_1^3 X_2 + X_2^3 X_0 = 0 \big) \subset \P^2_{\Q} \]
over $\Q$.
It was already known to Klein that $C_{\mathrm{Kl}}$
admits a symmetric determinantal representation over $\Q$.
In fact, it is easy to confirm the following equality (cf.\ \cite[p.\,161]{Edge2}):
\begin{equation}
\label{Equation:KleinQuartic}
X_0^3 X_1 + X_1^3 X_2 + X_2^3 X_0 = 
- \det
\begin{pmatrix}
X_0 & 0 & 0 & -X_1 \\
0 & X_1 & 0 & -X_2 \\
0 & 0 & X_2 & -X_0 \\
-X_1 & -X_2 & -X_0 & 0
\end{pmatrix}
\end{equation}
We prove that the above expression gives a {\em unique equivalence class} of
symmetric determinantal representations of $C_{\mathrm{Kl}}$ over $\Q$.

% m:matrix([a,0,0,-b],[0,b,0,-c],[0,0,c,-a],[-b,-c,-a,0]);
% ratsimp(determinant(m));

\begin{thm}
\label{MainTheoremKlein}
There is a unique equivalence class of symmetric determinantal representations of
the Klein quartic $C_{\mathrm{Kl}}$ over $\Q$.
\end{thm}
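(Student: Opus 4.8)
The plan is to use the correspondence established in Proposition \ref{Proposition:ExistenceCriterion}: the equivalence classes of symmetric determinantal representations of $C_{\mathrm{Kl}}$ over $\Q$ are in bijection with the non-effective theta characteristics on $C_{\mathrm{Kl}}$ defined over $\Q$. The matrix in (\ref{Equation:KleinQuartic}) already exhibits one such representation, hence one non-effective theta characteristic $\mathscr{L}_0$ over $\Q$; the content of the theorem is therefore that $\mathscr{L}_0$ is the \emph{only} non-effective theta characteristic on $C_{\mathrm{Kl}}$ over $\Q$.

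First I would set up the standard torsor picture. Since $C_{\mathrm{Kl}}$ is a smooth plane quartic it has genus $3$ and canonical bundle $K_{C_{\mathrm{Kl}}} \cong \O_{C_{\mathrm{Kl}}}(1)$, so a theta characteristic is a line bundle $\mathscr{L}$ with $\mathscr{L}^{\otimes 2} \cong \O_{C_{\mathrm{Kl}}}(1)$, and the set of all theta characteristics is a torsor under $\Jac(C_{\mathrm{Kl}})[2]$. As $\mathscr{L}_0$ is defined over $\Q$, every theta characteristic defined over $\Q$ is of the form $\mathscr{L}_0 \otimes T$ with $T \in \Jac(C_{\mathrm{Kl}})[2](\Q)$, and conversely each such $T$ yields a rational theta characteristic. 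Moreover the parity $h^0 \bmod 2$ is controlled by the classical quadratic refinement $q$ of the Weil pairing on $\Jac(C_{\mathrm{Kl}})[2]$, which satisfies $h^0(\mathscr{L}_0 \otimes T) \equiv h^0(\mathscr{L}_0) + q(T) \pmod 2$. Since $\mathscr{L}_0$ is non-effective, $\mathscr{L}_0 \otimes T$ is non-effective if and only if $q(T) = 0$. Thus the theorem is equivalent to the purely arithmetic assertion that the only $T \in \Jac(C_{\mathrm{Kl}})[2](\Q)$ with $q(T) = 0$ is $T = 0$, i.e. that every nonzero rational $2$-torsion class $T$ satisfies $q(T) = 1$.

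To prove this I would determine $\Jac(C_{\mathrm{Kl}})[2](\Q)$ together with the restriction of $q$ to it. The key geometric input is that the effective (odd) theta characteristics are exactly the $28$ bitangents of $C_{\mathrm{Kl}}$: a nonzero $T \in \Jac(C_{\mathrm{Kl}})[2](\Q)$ with $q(T) = 1$ is the same datum as a bitangent line defined over $\Q$, whereas $q(T) = 0$ would produce a second rational non-effective theta characteristic. I would therefore analyze the Galois action on $\Jac(C_{\mathrm{Kl}})[2]$ and on the $28$ bitangents directly, exploiting the rational automorphisms of $C_{\mathrm{Kl}}$ (for instance the cyclic permutation of coordinates, inside the large geometric automorphism group $\Aut(C_{\mathrm{Kl}}) \cong \mathrm{PSL}_2(\F_7)$) together with explicit equations of the bitangents over a suitable number field, in order to pin down which $2$-torsion classes are rational and to evaluate $q$ on them.

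The main obstacle is precisely this last step: the explicit determination of $\Jac(C_{\mathrm{Kl}})[2](\Q)$ and of $q$ restricted to it. Everything before it is formal, but excluding a nonzero rational $2$-torsion class with $q(T) = 0$ requires genuine control of the arithmetic of $\Jac(C_{\mathrm{Kl}})$. I expect the cleanest route to be either to show $\Jac(C_{\mathrm{Kl}})[2](\Q) = 0$ outright or, failing that, to show that $q$ is anisotropic on the nonzero rational classes so that each corresponds to a rational bitangent; this is the analogue for $C_{\mathrm{Kl}}$ of the effectivity computation carried out for $\Jac(F_7)[2](\Q) \cong (\Z/2\Z)^2$ in the proof of Theorem \ref{MainTheorem}.
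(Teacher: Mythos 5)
Your reduction is sound and essentially matches the paper's: by Proposition \ref{Proposition:ExistenceCriterion} the theorem amounts to showing there is exactly one rational non-effective theta characteristic, the explicit matrix (\ref{Equation:KleinQuartic}) supplies one, and the remaining task is to rule out a second one by controlling $\Jac(C_{\mathrm{Kl}})[2](\Q)$ and the parity of the corresponding theta characteristics. (One small point you gloss over: to know that every $T \in \Jac(C_{\mathrm{Kl}})[2](\Q)$ actually comes from a line bundle over $\Q$ --- i.e.\ that $\Pic(C_{\mathrm{Kl}})[2] = \Jac(C_{\mathrm{Kl}})[2](\Q)$ --- you need a $\Q$-rational point on $C_{\mathrm{Kl}}$, such as $(1,0,0)$; this is Corollary \ref{Corollary:ThetaCharacteristicOrder}~(2). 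Also, for a smooth plane quartic your parity criterion is equivalent to the paper's bare count, since even means $h^0=0$ and odd means $h^0=1$ here, so the quadratic-form language adds machinery without changing the argument.)

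The genuine gap is that the decisive arithmetic step is announced but never carried out: you do not determine $\Jac(C_{\mathrm{Kl}})[2](\Q)$, and you explicitly leave open whether it is $0$ or whether its nonzero elements are all ``odd.'' The first alternative is in fact false: the paper shows $\Jac(C_{\mathrm{Kl}})[2](\Q) \cong \Z/2\Z$ by identifying $C_{\mathrm{Kl}}$ over $\Q$ with the Gross--Rohrlich curve $C_2$ (projective smooth model of $V^7 = U(1-U)^2$) via $(a,b,c) \mapsto (-a^2b/c^3, -b/c)$ and quoting Theorem \ref{Theorem:GrossRohrlich}, which gives $\Jac(C_2)(\Q)_{\mathrm{tors}} \cong \Z/7\Z \times \Z/2\Z$. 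The nonzero class is then accounted for by the unique rational bitangent $X_0+X_1+X_2=0$ (tangent at $(\zeta_3,\zeta_3^2,1)$ and $(\zeta_3^2,\zeta_3,1)$), which yields the effective theta characteristic $\O_{C_{\mathrm{Kl}}}(P+Q)$; so the count is two rational theta characteristics, one effective and one non-effective, and the theorem follows. Your proposed alternative --- computing the Galois action on the $28$ bitangents from Shioda's explicit equations and reading off the rational $2$-torsion and the form $q$ --- is plausible in principle and genuinely different from the paper's route through Gross--Rohrlich, but as written it is a plan rather than a proof: without executing that computation (or importing the Gross--Rohrlich input) you cannot exclude a second rational even theta characteristic, which is precisely the content of the theorem.
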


The proof of Theorem \ref{MainTheoremKlein} is as follows.
It is classically known that $C_{\mathrm{Kl}}$
is birational over $\Q$ to the curve $C_2$ (or $C_4$) for $p = 7$
studied by Gross-Rohrlich (\cite[p.\,67]{Elkies}).
Using the results of Gross-Rohrlich,
we see that
\[ \Jac(C_{\mathrm{Kl}})[2](\Q) \cong \Z/2\Z. \]
Hence there are exactly two theta characteristics on $C_{\mathrm{Kl}}$ over $\Q$,
up to isomorphism.
There is at least one {\em effective} theta characteristic on $C_{\mathrm{Kl}}$
because the line $(X_0 + X_1 + X_2 = 0)$ is a bitangent to $C_{\mathrm{Kl}}$
over $\Q$, which corresponds to an {\em effective} theta characteristic
on $C_{\mathrm{Kl}}$ over $\Q$ (\cite[Ch.\ 6]{Dolgachev}).
Hence there is exactly one non-effective theta characteristic on $C_{\mathrm{Kl}}$
over $\Q$, up to isomorphism.

\begin{rem}
There is a long history on the study of the Klein quartic.
There is no surprise if some geometers have expected (or presumed)
that Theorem \ref{MainTheoremKlein} could be true.
Note that a rigorous proof of Theorem \ref{MainTheoremKlein}
requires the study of the field of definition of
equivalence classes of symmetric determinantal representations,
which is a slightly delicate arithmetic problem.
(See Corollary \ref{Corollary:ThetaCharacteristicOrder} (4).
See also \cite{Ho}, \cite{Ishitsuka}.)
These days, we become more interested in the arithmetic 
properties of linear orbits related to symmetric determinantal representations
thanks to the recent developments of Arithmetic Invariant Theory
(\cite{Ho}, \cite{BhargavaGrossWang}).
\end{rem}

In Section \ref{Section:SymmetricDeterminantalRepresentations},
we recall a relation between symmetric determinantal representations and
non-effective theta characteristics.
Theorem \ref{MainTheorem} for $p = 2$ is a consequence of the fact
that the conic $(X_0^2 + X_1^2 + X_2^2 = 0)$ has no $\Q$-rational points.
In Section \ref{Section:GrossRohrlich},
we recall some results of Gross-Rohrlich,
and prove Theorem \ref{MainTheorem} for $p \neq 2,7$.
The proof of Theorem \ref{MainTheorem} for $p = 7$
is given in Section \ref{Section:p=7}.
Finally, the proof of Theorem \ref{MainTheoremKlein}
is given in Section \ref{Section:ProofKlein}.

\section{Theta characteristics and symmetric determinantal representations}
\label{Section:SymmetricDeterminantalRepresentations}

Let $K$ be a field,
and $C \subset \P^2_K$ a smooth plane curve of degree $n \geq 1$.
The genus of $C$ is equal to $g(C) := (n-1)(n-2)/2$.

\begin{defn}[\cite{MumfordTheta}]
\label{DefinitionTheta}
\begin{enumerate}
\item A {\em theta characteristic} on $C$ is a line bundle $\mathscr{L}$ on $C$
satisfying $\mathscr{L} \otimes \mathscr{L} \cong \Omega^1_{C}$,
where $\Omega^1_{C}$ is the canonical sheaf on $C$.
\item A theta characteristic $\mathscr{L}$ on $C$
is {\em effective} (resp.\ {\em non-effective}) 
if $H^0(C,\mathscr{L}) \neq 0$ (resp.\ $H^0(C,\mathscr{L}) = 0$).
\end{enumerate}
\end{defn}

\begin{prop}
\label{Proposition:ExistenceCriterion}
There is a bijection between
the set of isomorphism classes of non-effective theta characteristics on $C$
and the set of equivalence classes
of symmetric determinantal representations of $C$ over $K$.
\end{prop}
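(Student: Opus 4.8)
The plan is to establish a bijection between symmetric determinantal representations of $C$ over $K$ (up to equivalence) and non-effective theta characteristics on $C$ (up to isomorphism) by passing through the geometry of the cokernel sheaf. Given a triple $(M_0, M_1, M_2)$ of symmetric $n \times n$ matrices over $K$ defining $C$, set $M := X_0 M_0 + X_1 M_1 + X_2 M_2$, regarded as a symmetric map $\O_{\P^2_K}(-1)^{\oplus n} \to \O_{\P^2_K}^{\oplus n}$. The vanishing locus of $\det M$ is exactly $C$, so over the open complement $\P^2_K \setminus C$ the matrix $M$ is invertible; the adjugate identity $M \cdot \mathrm{adj}(M) = (\det M) \cdot I$ shows that on $C$ the rank of $M$ drops, and the cokernel of $M$ is supported on $C$. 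First I would show that for $C$ smooth the cokernel $\mathscr{L}$ is a line bundle on $C$ (the corank of $M$ is exactly $1$ along $C$, which uses smoothness to rule out higher-order degeneracy). The symmetry of the matrices is precisely what forces $\mathscr{L}$ to be a theta characteristic: from the symmetric resolution one obtains an isomorphism $\mathscr{L} \otimes \mathscr{L} \cong \O_C(n-3) \cong \Omega^1_C$ via the adjunction formula for a plane curve of degree $n$. The non-effectivity $H^0(C,\mathscr{L}) = 0$ will come from the fact that the global sections of $\mathscr{L}$ correspond to linear syzygies, which a genuinely symmetric determinantal resolution of minimal size does not possess.

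For the reverse direction, I would start from a non-effective theta characteristic $\mathscr{L}$ and reconstruct the matrices. The key is to produce a symmetric locally free resolution of $\mathscr{L}$, viewed as a sheaf on $\P^2_K$ pushed forward from $C$. One shows that the condition $H^0(C,\mathscr{L}) = 0$ (together with $H^1(C,\mathscr{L}) = 0$, which follows by Riemann-Roch and Serre duality since $\deg \mathscr{L} = g-1$ and $\mathscr{L}$ is non-effective) guarantees that $\mathscr{L}$ admits a resolution of the shape
\[ 0 \longrightarrow \O_{\P^2_K}(-1)^{\oplus n} \xrightarrow{\ M\ } \O_{\P^2_K}^{\oplus n} \longrightarrow \mathscr{L} \longrightarrow 0, \]
where $M$ is a matrix of linear forms. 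The theta characteristic condition $\mathscr{L} \otimes \mathscr{L} \cong \Omega^1_C$ is what allows $M$ to be chosen symmetric: applying $\RHom(-, \O_{\P^2_K}(-3))$ to the resolution and using Serre duality on $C$ produces a second resolution of $\mathscr{L}$ which, by the self-duality built into the theta characteristic condition, is isomorphic to the transpose of the first; comparing the two and rigidifying the isomorphism yields a symmetric $M$. Writing $M = X_0 M_0 + X_1 M_1 + X_2 M_2$ then recovers the symmetric matrices, and $\det M$ vanishes exactly on the support of $\mathscr{L}$, namely $C$, up to a scalar in $K^\times$.

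The two constructions must be shown to be mutually inverse and compatible with the equivalence relations. For the equivalence relation, a change of basis $P \in \GL_n(K)$ on the source $\O_{\P^2_K}(-1)^{\oplus n}$ and its transpose on the target, together with scaling by $a \in K^\times$, is exactly the ambiguity in the choice of a symmetric resolution of a fixed $\mathscr{L}$; conversely isomorphic theta characteristics give equivalent resolutions. I would verify that the automorphisms of the resolution inducing the identity on $\mathscr{L}$ are precisely the pairs $(a, P)$ appearing in the definition of equivalence, so that isomorphism classes on one side match equivalence classes on the other.

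The hardest step will be the construction of the \emph{symmetric} resolution from $\mathscr{L}$ and, correspondingly, the proof that the cokernel construction lands in genuinely non-effective theta characteristics with the symmetry intact; in other words, tracking that the self-duality $\mathscr{L} \cong \Homsheaf_{\O_C}(\mathscr{L}, \Omega^1_C)$ can be made to descend to an honest symmetry of $M$ over the possibly non-algebraically-closed field $K$, rather than merely over $\overline{K}$. This is a descent and rigidification issue: one must check that the isomorphism between $M$ and its transpose can be normalized to be symmetric (not merely symmetric up to a sign or a scalar) without base change, which is where the characteristic of $K$ and the choice of sign conventions enter. I expect this to follow from a careful analysis of the pairing induced by Serre duality and the uniqueness of the resolution up to the prescribed equivalence, but it is the step requiring the most care.
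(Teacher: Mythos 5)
The paper does not actually prove this proposition: its ``proof'' is a pointer to \cite{BeauvilleDeterminantal}, \cite{Dolgachev}, \cite{Ho}, and, for arbitrary fields, to \cite{Ishitsuka}. Your proposal reconstructs exactly the argument of those references (cokernel construction in one direction, symmetric linear resolution in the other), so the route is the intended one; but two points need repair. The first is a concrete error in your normalization of the twist. For $M \colon \O_{\P^2_K}(-1)^{\oplus n} \to \O_{\P^2_K}^{\oplus n}$ the cokernel $\mathscr{L}$ has $\chi(\mathscr{L}) = n\chi(\O_{\P^2_K}) - n\chi(\O_{\P^2_K}(-1)) = n$, hence degree $g-1+n$, not $g-1$; dualizing the symmetric resolution by $\RHom(-,\O_{\P^2_K}(-3))$ and using $\Extsheaf^1(\mathscr{L},\omega_{\P^2_K}) \cong \Homsheaf(\mathscr{L},\Omega^1_C)$ gives $\mathscr{L}\otimes\mathscr{L} \cong \O_C(n-1)$, not $\O_C(n-3)$; and $H^0(C,\mathscr{L})$ receives all of $H^0(\O_{\P^2_K}^{\oplus n}) = K^n$, so your $\mathscr{L}$ is emphatically effective. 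The theta characteristic is $\theta := \mathscr{L}\otimes\O_C(-1)$, and with this correction its non-effectivity is automatic from the twisted resolution
\[ 0 \longrightarrow \O_{\P^2_K}(-2)^{\oplus n} \longrightarrow \O_{\P^2_K}(-1)^{\oplus n} \longrightarrow \theta \longrightarrow 0 \]
together with $H^0(\O_{\P^2_K}(-1)) = H^1(\O_{\P^2_K}(-2)) = 0$ --- no argument about syzygies is needed.

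The second point is that the step you explicitly defer --- upgrading the self-duality $\theta \cong \Homsheaf(\theta,\Omega^1_C)$ to an honestly symmetric matrix $M$ over $K$, and identifying the automorphisms of the resolution with the pairs $(a,P)$ of the equivalence relation --- is the actual content of the proposition rather than a routine verification. Over an algebraically closed field of characteristic $\neq 2$ this is \cite[Proposition 4.2]{BeauvilleDeterminantal}; since the proposition is stated for an arbitrary field $K$ (including $\chara K = 2$, which the paper needs nowhere but states anyway), the rigidification of ``symmetric up to a unit'' to ``symmetric'' genuinely requires the analysis of \cite{Ishitsuka}, as does the descent of the bijection to $K$-isomorphism classes versus $\overline{K}$-isomorphism classes. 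You correctly identify this as the hard step, but ``I expect this to follow'' is not a proof; either restrict to $\chara K \neq 2$ and run the classical argument with the Serre-duality pairing made explicit, or carry out the symmetrization over $K$ in detail.
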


\begin{proof}
This proposition is well-known when $\chara K \neq 2$
(\cite[Proposition 6.23]{BeauvillePrym}, \cite[Proposition 4.2]{BeauvilleDeterminantal},
\cite[Ch 4]{Dolgachev}, \cite[Theorem 4.12]{Ho}).
It is not difficult to modify the arguments in \cite{BeauvilleDeterminantal}
to cover the case of characteristic two (\cite[Remark 2.2]{BeauvilleDeterminantal}).
For a proof of this proposition which works over arbitrary fields,
see also \cite{Ishitsuka}.
\end{proof}

In order to study the field of definition of
equivalence classes of symmetric determinantal representations,
we use the Picard scheme and the Jacobian variety of the smooth plane curve $C$
(\cite{BoschLuetkebohmertRaynaud}, \cite{KleimanPicard}).
Let us recall their basic properties.
The {\em Picard group}
\[ \Pic(C) := H^1(C,\O_C^{\times}) \]
is the group of isomorphism classes of line bundles on $C$.
Let $\Pic_{C/K}$ be the {\em Picard scheme} of $C$
representing the relative Picard functor (\cite[Theorem 3 in \S 8.2]{BoschLuetkebohmertRaynaud}).
We have the following exact sequence
\[
\xymatrix{
0 \ar[r] & \Pic(C)
  \ar[r] & \Pic_{C/K}(K)
  \ar[r] & \Br(K),
}
\]
where $\Br(K)$ is the Brauer group of $K$.

The equality $\Pic(C) = \Pic_{C/K}(K)$ holds if $C$ has a $K$-rational point
(\cite[Proposition 4 in \S 8.1]{BoschLuetkebohmertRaynaud}).
The identity component of $\Pic_{C/K}$
is denoted by $\Jac(C)$ called the {\em Jacobian variety} of $C$.
It is known that $\Jac(C)$ is an abelian variety over $K$
of dimension $g(C) = (n-1)(n-2)/2$
(\cite[Proposition 3 in \S 9.2]{BoschLuetkebohmertRaynaud}).

Let $\Pic(C)[2]$ be the group of $2$-torsion points on $\Pic(C)$,
which is the group of isomorphism classes of line bundles $\mathscr{L}$
with $\mathscr{L} \otimes \mathscr{L} \cong \O_C$.
The group of $K$-rational $2$-torsion points on $\Jac(C)$
is denoted by $\Jac(C)[2](K)$.
It is always true that $\Pic(C)[2]$ is a subgroup of $\Jac(C)[2](K)$.
These two groups are not necessarily equal if $C$ has no $K$-rational points.

Using Proposition \ref{Proposition:ExistenceCriterion}
and the following corollary,
we give an upper bound of the number of equivalence classes of
symmetric determinantal representations.

\begin{cor}
\label{Corollary:ThetaCharacteristicOrder}
\begin{enumerate}
\item The number of isomorphism classes of theta characteristics on $C$
is less than or equal to the order of $\Jac(C)[2](K)$.
\item If $C$ has a $K$-rational point,
the number of isomorphism classes of theta characteristics on $C$
is zero or equal to the order of $\Jac(C)[2](K)$.
\item The number of equivalence classes of symmetric determinantal representations of $C$
over $K$ is finite.
\item Let $L/K$ be an extension of fields.
Two symmetric determinantal representations of $C$ over $K$ are equivalent over $K$
if and only if they are equivalent over $L$.
\end{enumerate}
\end{cor}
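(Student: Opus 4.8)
The plan is to exploit the torsor structure on the set of theta characteristics. Whenever $C$ admits at least one theta characteristic $\mathscr{L}_0$ over $K$, the assignment $\mathscr{L} \mapsto \mathscr{L} \otimes \mathscr{L}_0^{\vee}$ identifies the set of (isomorphism classes of) theta characteristics with $\Pic(C)[2]$, since $\mathscr{L}^{\otimes 2} \cong \Omega^1_C \cong \mathscr{L}_0^{\otimes 2}$ holds exactly when $(\mathscr{L} \otimes \mathscr{L}_0^{\vee})^{\otimes 2} \cong \O_C$. Hence the number of isomorphism classes of theta characteristics on $C$ over $K$ is either $0$ or $|\Pic(C)[2]|$. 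As $\Pic(C)[2]$ is a subgroup of $\Jac(C)[2](K)$, this already gives (1). For (2), a $K$-rational point yields $\Pic(C) = \Pic_{C/K}(K)$, so $\Pic(C)[2] = \Pic_{C/K}(K)[2]$; any $2$-torsion class is killed by the degree homomorphism to $\Z$ and therefore lies in $\Jac(C) = \Pic^0_{C/K}$, giving $\Pic(C)[2] = \Jac(C)[2](K)$, so the count is $0$ or $|\Jac(C)[2](K)|$.

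For (3) I would combine Proposition \ref{Proposition:ExistenceCriterion} with (1): equivalence classes of symmetric determinantal representations over $K$ correspond bijectively to non-effective theta characteristics on $C$ over $K$, which form a subset of all theta characteristics and so number at most $|\Jac(C)[2](K)|$. Since $\Jac(C)[2]$ is a finite $K$-group scheme, $\Jac(C)[2](K)$ is finite, whence finiteness.

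Part (4) is the main point. One implication is immediate, since a common equivalence $(P,a) \in \GL_n(K) \times K^{\times}$ over $K$ is in particular one over $L$. For the converse I would pass through Proposition \ref{Proposition:ExistenceCriterion}, realised by the cokernel line bundle of the map $\O_{\P^2}(-1)^{n} \to \O_{\P^2}^{n}$ attached to the matrix and restricted to $C$. This construction commutes with the base change $- \otimes_K L$, so a representation defined over $K$ corresponds over $L$ to the base change $\mathscr{L}_L$ of the theta characteristic $\mathscr{L}$ it determines over $K$ (non-effectiveness persists because $H^0(C_L, \mathscr{L}_L) = H^0(C, \mathscr{L}) \otimes_K L$). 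Thus two representations over $K$ are equivalent over $L$ if and only if the associated non-effective theta characteristics $\mathscr{L}_1, \mathscr{L}_2$ satisfy $\mathscr{L}_{1,L} \cong \mathscr{L}_{2,L}$, and it remains to descend this to an isomorphism over $K$.

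To descend the isomorphism of line bundles, set $\mathscr{N} := \mathscr{L}_1 \otimes \mathscr{L}_2^{\vee}$, so that $\mathscr{L}_1 \cong \mathscr{L}_2$ over $K$ if and only if $\mathscr{N} \cong \O_C$. Since $C$ is a smooth, geometrically integral, projective curve, a line bundle $\mathscr{N}$ is trivial precisely when $H^0(C, \mathscr{N}) \neq 0$ and $H^0(C, \mathscr{N}^{\vee}) \neq 0$: a nonzero section of $\mathscr{N}$ gives $\mathscr{N} \cong \O_C(D)$ with $D$ effective, a nonzero section of $\mathscr{N}^{\vee}$ forces $\deg D \leq 0$, and an effective divisor of degree $0$ is trivial. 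By flat base change for the cohomology of the proper curve $C$ one has $H^0(C_L, \mathscr{N}_L) = H^0(C, \mathscr{N}) \otimes_K L$ and likewise for $\mathscr{N}^{\vee}$, so these non-vanishing conditions are insensitive to the field; hence $\mathscr{N}_L \cong \O_{C_L}$ forces $\mathscr{N} \cong \O_C$, completing (4). I expect the only genuine subtlety to be the base-change compatibility of the bijection in Proposition \ref{Proposition:ExistenceCriterion}; once that is granted, the line-bundle descent itself is clean as soon as it is phrased through $H^0$.
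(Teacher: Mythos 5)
Your proof is correct, and parts (1)--(3) follow the paper's own argument essentially verbatim (the torsor structure over $\Pic(C)[2]$, the identification $\Pic(C)[2]=\Jac(C)[2](K)$ in the presence of a rational point, and finiteness of $\Jac(C)[2](K)$). The one place where you genuinely diverge is the key step of (4), the injectivity of $\Pic(C)\to\Pic(C\otimes_K L)$. The paper gets this in one line from representability: $\Pic(C)\hookrightarrow\Pic_{C/K}(K)\to\Pic_{C/K}(L)=\Pic_{C\otimes_K L/L}(L)$ is injective because $K$-points of a scheme inject into its $L$-points. You instead characterize triviality of a line bundle $\mathscr{N}$ on a geometrically integral projective curve by the simultaneous nonvanishing of $H^0(\mathscr{N})$ and $H^0(\mathscr{N}^{\vee})$ and invoke flat base change for cohomology; this is more elementary in that it avoids the Picard scheme for this step (though the paper needs that machinery elsewhere anyway), at the mild cost of having to note that $C$ is geometrically integral. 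Your explicit attention to the base-change compatibility of the bijection in Proposition \ref{Proposition:ExistenceCriterion} and to the persistence of non-effectiveness under extension of scalars is a point the paper leaves implicit, and it is indeed the only real subtlety; both arguments are sound.
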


\begin{proof}
(1) \ Assume that there is a theta characteristic $\mathscr{L}$ on $C$.
The other theta characteristics on $C$
are of the form $\mathscr{L} \otimes \mathscr{L}'$,
where $\mathscr{L}'$ is a line bundle on $C$ with $\mathscr{L}' \otimes \mathscr{L}' \cong \O_C$.
Hence the number of isomorphism classes of
theta characteristics on $C$ is equal to the order of $\Pic(C)[2]$,
which is less than or equal to the order of $\Jac(C)[2](K)$
because $\Pic(C)[2]$ is a subgroup of $\Jac(C)[2](K)$.

\vspace{0.1in}

\noindent
(2) \ If $C$ has a $K$-rational point, we have $\Pic(C)[2] = \Jac(C)[2](K)$,
and the assertion (2) follows.

\vspace{0.1in}

\noindent
(3) \ Since $\Jac(C)[2](K)$ is a finite group, the assertion (3) follows.

\vspace{0.1in}

\noindent
(4) \ Since $(\Pic_{C/K}) \otimes_K L = \Pic_{C \otimes_K L/L}$,
we have $\Pic_{C/K}(L) = \Pic_{C \otimes_K L/L}(L)$.
The map $\Pic_{C/K}(K) \longrightarrow \Pic_{C \otimes_K L/L}(L)$ is injective.
Hence $\Pic(C) \longrightarrow \Pic(C \otimes_K L)$ is also injective, and
the assertion (4) follows.
\end{proof}

\begin{rem}
If we define the notion of theta characteristics on singular plane curves
as in \cite{Piontkowski},
Proposition \ref{Proposition:ExistenceCriterion} can be generalized to
the case of singular plane curves.
There is a natural bijection between equivalence classes of symmetric determinantal representations
of $C$ over $K$
and isomorphism classes of non-effective theta characteristics $\mathscr{L}$ on $C$
equipped with a certain isomorphism between $\mathscr{L}$ and the dual of it
(\cite{Ishitsuka}).
However, when $C$ has singularities,
Corollary \ref{Corollary:ThetaCharacteristicOrder} (3),(4) are not true in general.
(See \cite{Ishitsuka} for details.)
\end{rem}

\begin{cor}
\label{Corollary:OddDegreeNonExistence}
Assume that $n \geq 3$ and $n$ is {\em odd}.
If $\Jac(C)[2](K) = 0$, the smooth plane curve
$C$ does not admit a symmetric determinantal representation over $K$.
\end{cor}

\begin{proof}
By Corollary \ref{Corollary:ThetaCharacteristicOrder} (1),
there is {\em at most one} isomorphism class of theta characteristics on $C$.
By Proposition \ref{Proposition:ExistenceCriterion},
we have only to show that there is an {\em effective} theta characteristic on $C$.
Since $C \subset \P^2_K$ is a smooth plane curve of degree $n$,
the canonical sheaf $\Omega^1_{C}$ is isomorphic to
the restriction
\[ \O_{C}(n-3) := \O_{\P^2_K}(n-3)|_{C} \]
(\cite[II., 8.20.3]{Hartshorne}, \cite[Exercise 6.4.11]{Liu}).
Hence
\[ \O_{C}\big((n-3)/2\big) := \O_{\P^2_K}\big((n-3)/2\big)|_{C} \]
is a theta characteristic on $C$.
It is effective because homogeneous polynomials in $X_0,X_1,X_2$ of degree $(n-3)/2$
give global sections of it.
\end{proof}

In the following proposition, we consider symmetric determinantal representations
of smooth conics.
(See also \cite[Proposition 5.1]{IshitsukaIto2})

\begin{prop}
\label{Proposition:Conic}
Assume that $n = 2$.
For a smooth conic $C \subset \P^2_K$ over $K$, the following are equivalent.
\begin{enumerate}
\item $C$ is isomorphic to $\P^1_K$ over $K$.
\item $C$ has a $K$-rational point.
\item $C$ has a line bundle of odd degree over $K$.
\item $C$ admits a symmetric determinantal representation over $K$.
\end{enumerate}
If the above conditions are satisfied,
there is a unique equivalence class of 
symmetric determinantal representations of $C$ over $K$.
\end{prop}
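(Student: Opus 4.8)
The plan is to prove the chain of equivalences by combining the elementary geometry of genus-zero curves with the theta-characteristic dictionary of Proposition \ref{Proposition:ExistenceCriterion}, specialized to $n = 2$. Throughout I use that a smooth conic has genus $g(C) = 0$ and that $\deg \O_C(1) = 2$.

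First I would dispose of the purely geometric equivalences (1) $\Leftrightarrow$ (2) $\Leftrightarrow$ (3). The implication (1) $\Rightarrow$ (2) is immediate, since $\P^1_K$ has $K$-rational points, and (2) $\Rightarrow$ (1) is the classical fact that linear projection from a $K$-rational point of a smooth conic gives an isomorphism $C \cong \P^1_K$ over $K$. For (2) $\Rightarrow$ (3), a $K$-rational point is an effective divisor of degree $1$, hence a line bundle of odd degree. The reverse implication (3) $\Rightarrow$ (2) is where Riemann--Roch enters: given a line bundle of odd degree $d$, the twist $\mathscr{L}' := \mathscr{L} \otimes \O_C(1)^{\otimes k}$ with $k = (1-d)/2 \in \Z$ has degree exactly $1$; since $\deg \mathscr{L}' = 1 > 2g(C) - 2 = -2$, we get $H^1(C,\mathscr{L}') = 0$ and hence $h^0(C,\mathscr{L}') = 2$, so $\mathscr{L}'$ is effective and its nonzero section cuts out an effective divisor of degree $1$, i.e.\ a $K$-rational point.

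Next I would bring in Proposition \ref{Proposition:ExistenceCriterion}. For $n = 2$ we have $\Omega^1_C \cong \O_C(-1)$, of degree $-2$, so every theta characteristic $\mathscr{L}$ satisfies $2 \deg \mathscr{L} = -2$, that is $\deg \mathscr{L} = -1$, which is odd. This gives (4) $\Rightarrow$ (3) at once: a symmetric determinantal representation over $K$ yields, by Proposition \ref{Proposition:ExistenceCriterion}, a non-effective theta characteristic, which is a line bundle of odd degree; combined with the previous paragraph this gives (4) $\Rightarrow$ (2). For the converse I assume (1)/(2), so that $C \cong \P^1_K$ and $\Pic(C) \cong \Z$; then there is exactly one line bundle of degree $-1$, namely $\O_{\P^1_K}(-1)$, so $C$ has a \emph{unique} theta characteristic, and since $H^0(\P^1_K, \O(-1)) = 0$ it is non-effective. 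By Proposition \ref{Proposition:ExistenceCriterion} this non-effective theta characteristic corresponds to a symmetric determinantal representation, proving (2) $\Rightarrow$ (4), and its uniqueness yields the final uniqueness assertion. This closes the cycle (2) $\Rightarrow$ (1) $\Rightarrow$ (4) $\Rightarrow$ (3) $\Rightarrow$ (2).

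I expect the only genuinely delicate point to be ensuring that the theta-characteristic count is performed over $K$ rather than over $\overline{K}$; but this is precisely what Proposition \ref{Proposition:ExistenceCriterion} together with the identification $\Pic(\P^1_K) \cong \Z$ supplies, and everything else is routine. As a concrete sanity check when the conditions hold, one may note that a conic with a $K$-rational point is projectively equivalent over $K$ to $(X_0 X_2 = X_1^2)$, which is cut out by the symmetric determinant of $\left(\begin{smallmatrix} X_0 & X_1 \\ X_1 & X_2 \end{smallmatrix}\right)$, exhibiting the representation predicted by the theta characteristic $\O_{\P^1_K}(-1)$.
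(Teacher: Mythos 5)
Your proof is correct and follows essentially the same route as the paper: both reduce everything to the observation that a theta characteristic on a conic has degree $-1$ and then invoke Proposition \ref{Proposition:ExistenceCriterion}. The only cosmetic difference is that for $(3)\Rightarrow(2)$ you twist to a degree-one bundle and apply Riemann--Roch to produce a rational point, whereas the paper argues directly inside $\Pic_{C/K}(K)\cong\Z$ that a line bundle of odd degree forces $\Pic(C)=\Pic_{C/K}(K)$ and hence yields the degree $-1$ theta characteristic without first passing through a rational point; both arguments are standard and complete.
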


\begin{proof}
$(3) \Rightarrow (4)$ \ Since the smooth conic $C$ is a projective smooth curve of genus 0,
we have $\Jac(C) = 0$ and $\Pic(C) \subset \Pic_{C/K}(K) \cong \Z$.
The isomorphism $\Pic_{C/K}(K) \overset{\cong}{\longrightarrow} \Z$ is given by the degree of line bundles.
Since $\deg \Omega^1_{C} = -2$,
$\Pic(C)$ is a subgroup of $\Pic_{C/K}(K)$ of index less than or equal to $2$.
If $C$ has a line bundle of odd degree over $K$,
we have $\Pic(C) = \Pic_{C/K}(K)$.
There is a line bundle $\mathscr{L}$ of degree $-1$,
which is a non-effective theta characteristic on $C$ over $K$.
By Proposition \ref{Proposition:ExistenceCriterion},
$C$ admits a symmetric determinantal representation over $K$.

\vspace{0.1in}

\noindent
$(4) \Rightarrow (3)$ \ If $C$ admits a symmetric determinantal representation over $K$,
there is a non-effective theta characteristic $\mathscr{L}$ on $C$ 
by Proposition \ref{Proposition:ExistenceCriterion}.
We see that $\deg \mathscr{L} = -1$ is odd.

\vspace{0.1in}

\noindent
$(1) \Leftrightarrow (2) \Leftrightarrow (3)$ \ These implications are well-known.
We briefly recall the proof.
The implications (1) $\Rightarrow$ (2) $\Rightarrow$ (3) are obvious.
Assume that $C$ has a line bundle of odd degree.
We have $\Pic(C) = \Pic_{C/K}(K) \cong \Z$,
and there is a divisor $D$ on $C$ of degree 1.
Since the complete linear system $|D|$ is one-dimensional and very ample,
$C$ is isomorphic to $\P^1_K$ over $K$ (\cite[IV., 3.3.1]{Hartshorne}, \cite[Proposition 7.4.1]{Liu}).

\vspace{0.1in}

\noindent
The last assertion follows from Corollary \ref{Corollary:ThetaCharacteristicOrder} (2).
\end{proof}

\begin{proof}[\bf \em Proof of Theorem \ref{MainTheorem} (for $p = 2$)]
The smooth conic $(X_0^2 + X_1^2 + X_2^2 = 0)$ has no $\Q$-rational points.
It does not admit a symmetric determinantal representation over $\Q$
by Proposition \ref{Proposition:Conic}.
\end{proof}

\begin{rem}
\label{RemarkConicWithRationalPoints}
The Fermat curves are sometimes defined by the equation
\[ X_0^p + X_1^p = X_2^p \]
instead of $X_0^p + X_1^p + X_2^p = 0$.
There is no essential difference when $p$ is odd.
However, when $p = 2$, the conic $(X_0^2 + X_1^2 = X_2^2)$
is not isomorphic over $\Q$ to the conic $(X_0^2 + X_1^2 + X_2^2 = 0)$.
Since the conic $(X_0^2 + X_1^2 = X_2^2)$ has a $\Q$-rational point such as $(1,0,1)$,
there is a unique equivalence class of symmetric determinantal representations of it
over $\Q$ by Proposition \ref{Proposition:Conic}.
For example, we have
\[
X_0^2 + X_1^2 - X_2^2 =
- \det
\begin{pmatrix}
X_0 + X_2 & X_1 \\ X_1 & -X_0+X_2
\end{pmatrix}.
\]
\end{rem}

% m:matrix([a+c,b],[b,-a+c]);
% ratsimp(determinant(m));

\section{Results of Gross-Rohrlich and
the non-existence of symmetric determinantal representations of the Fermat curve of degree $p \neq 7$}
\label{Section:GrossRohrlich}

We recall some results of Gross-Rohrlich on the $\Q$-rational torsion points
on the Jacobian varieties of the Fermat curve of prime degree (\cite{GrossRohrlich}).

Let $p \geq 3$ be an odd prime number,
and
\[ F_p : = \big( X_0^p + X_1^p + X_2^p = 0 \big) \subset \P^2_{\Q} \]
the Fermat curve of degree $p$ over $\Q$.
For an integer $s$ with $1 \leq s \leq p-2$,
let $C_s$ be the projective smooth model of the affine curve
\[ V^p = U (1 - U)^s. \]

\begin{thm}[Theorem 1.1 in \cite{GrossRohrlich}]
\label{Theorem:GrossRohrlich}
Let $\Jac(C_s)(\Q)_{\text{\rm tors}}$ be the group of $\Q$-rational torsion points on $\Jac(C_s)$.
Then we have
\[
\Jac(C_s)(\Q)_{\text{\rm tors}} \cong
\begin{cases}
\Z/p\Z & p \neq 7 \ \text{or} \ (p,s) = (7,1),(7,3),(7,5) \\
\Z/7\Z \times \Z/2\Z & (p,s) = (7,2),(7,4).
\end{cases}
\]
\end{thm}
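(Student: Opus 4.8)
The plan is to treat $C_s$ as a cyclic cover of $\P^1$ of degree $p$, totally ramified over the three points $U = 0, 1, \infty$, and to pin down $\Jac(C_s)(\Q)_{\text{tors}}$ by squeezing it between an explicit lower bound coming from cuspidal divisor classes and an upper bound coming from counting points after reduction modulo good primes. The automorphism $V \mapsto \zeta_p V$ endows $\Jac(C_s)$ with an action of $\Z[\zeta_p]$; since $\dim \Jac(C_s) = (p-1)/2 = \tfrac12[\Q(\zeta_p):\Q]$ by Riemann--Hurwitz, the Jacobian is an abelian variety with complex multiplication by $\Q(\zeta_p)$, and this is what makes the point counts accessible.

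For the lower bound I would work with the three totally ramified points $P_0, P_1, P_\infty$ lying over $U = 0, 1, \infty$, all of which are $\Q$-rational. The functions $U$, $1-U$, $V$ have divisors
\[ \divi(U) = p(P_0 - P_\infty), \quad \divi(1-U) = p(P_1 - P_\infty), \quad \divi(V) = (P_0 - P_\infty) + s(P_1 - P_\infty). \]
The first two relations show that the classes $a := [P_0 - P_\infty]$ and $b := [P_1 - P_\infty]$ are $\Q$-rational and $p$-torsion, while the third gives $a + s b = 0$. Since $1 \leq s \leq p-2$ is prime to $p$, the cuspidal subgroup is cyclic, generated by $a$, and it is nonzero because $P_0 - P_\infty$ cannot be principal on a curve of positive genus. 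This produces the copy of $\Z/p\Z$ inside $\Jac(C_s)(\Q)_{\text{tors}}$.

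For the upper bound I would control the $q$-primary part of the torsion one prime $q$ at a time, by reducing modulo a prime $\ell \neq q$ of good reduction (all $\ell \neq p$): the reduction map is injective on prime-to-$\ell$ torsion, so the $q$-part embeds into $\Jac(C_s)(\F_\ell)$ and its order divides the $q$-part of $\#\Jac(C_s)(\F_\ell)$. Choosing $\ell \equiv 1 \pmod p$, so that a character $\chi$ of order $p$ on $\F_\ell^\times$ exists, the eigenvalues of Frobenius on $H^1$ become Jacobi sums of the form $J(\chi^a, \chi^{as})$, and $\#\Jac(C_s)(\F_\ell) = \prod_i (1 - \alpha_i)$ is an explicit product of such factors. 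Comparing the values at two well-chosen primes $\ell$ should force the prime-to-$p$ part to vanish (outside the exceptional $p = 7$ cases), while the exact $p$-part is governed by showing $p^2 \nmid \#\Jac(C_s)(\F_\ell)$ for a suitable $\ell$. I expect the main obstacle to be precisely this last point: establishing that $p$ exactly divides $\#\Jac(C_s)(\F_\ell)$ requires a congruence for the Jacobi sums modulo $\lambda^2$, where $\lambda$ is the prime of $\Z[\zeta_p]$ above $\ell$, in the spirit of the classical Iwasawa and Gross--Koblitz congruences; this is the technically delicate heart of the argument.

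Finally, the case $p = 7$ must be separated out, since for $s = 2, 4$ there is genuine extra $2$-torsion and $\Jac(C_s)(\Q)_{\text{tors}} \cong \Z/7\Z \times \Z/2\Z$. Here $C_s$ has genus $3$ and is closely tied to the Klein quartic; I would exhibit the extra class by writing down an explicit $\Q$-rational $2$-torsion divisor (equivalently an effective theta characteristic arising from a bitangent), and then confirm by a single point count at a good prime $\ell$ that no further torsion occurs, while for the remaining residues $s = 1, 3, 5$ the same counts show the $2$-part is trivial, recovering $\Z/7\Z$ in those cases.
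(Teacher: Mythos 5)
This statement is not proved in the paper at all: it is imported verbatim as Theorem~1.1 of Gross--Rohrlich, so there is no internal argument to compare yours against, and what you have written is in effect an outline of Gross--Rohrlich's own proof. Your lower bound is complete and correct: the divisor identities $\divi(U)=p(P_0-P_\infty)$, $\divi(1-U)=p(P_1-P_\infty)$ and $\divi(V)=(P_0-P_\infty)+s(P_1-P_\infty)$ do produce a cyclic cuspidal subgroup killed by $p$ and generated by $a=[P_0-P_\infty]$, and $a\neq 0$ because a function with a single simple pole on a curve of genus $(p-1)/2>0$ would give an isomorphism to $\P^1$; since $p$ is prime, $a$ has exact order $p$. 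That gives $\Z/p\Z\subseteq\Jac(C_s)(\Q)_{\mathrm{tors}}$.

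The upper bound is where the entire content of the theorem lies, and there your proposal is a program, not a proof. Three concrete gaps. First, ``comparing the values at two well-chosen primes $\ell$ should force the prime-to-$p$ part to vanish'' is never executed: no primes are chosen, and since the theorem is asserted for every prime $p$ simultaneously, no finite list of numerical point counts can suffice --- one needs the Jacobi-sum/Stickelberger analysis carried out uniformly in $p$, which is exactly the part you leave implicit. Second, you yourself flag that bounding the $p$-part requires showing $p^2\nmid\#\Jac(C_s)(\F_\ell)$ via a congruence for Jacobi sums modulo $\lambda^2$; acknowledging that this is ``the technically delicate heart of the argument'' does not discharge it, and without it you have no bound on the $p$-primary torsion at all. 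Third, the $p=7$ dichotomy is asserted rather than derived: you must both exhibit the rational $2$-torsion class on $C_2\cong C_4$ (these are models of the Klein quartic; the class can be realized as a difference of two $\Q$-rational theta characteristics, or via the explicit divisor $D$ appearing in Section~4 of this paper) and prove that $\Jac(C_s)[2](\Q)=0$ for $s=1,3,5$, which again needs an actual computation of the Galois action on $\Jac(C_s)[2]$ or a verified point count. As written, the proposal establishes only the inclusion $\Z/p\Z\subseteq\Jac(C_s)(\Q)_{\mathrm{tors}}$ and the correct overall strategy for the reverse inequality.
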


Let us calculate the $\Q$-rational prime-to-$p$ torsion points on $\Jac(F_p)$
using Theorem \ref{Theorem:GrossRohrlich}.
There is a finite morphism
\[ \varphi_s \colon F_p \longrightarrow C_s \]
of degree $p$ defined by
\[
\varphi_s(X_0,X_1,X_2) = \big( -(X_0/X_2)^p,\  (-1)^{s+1} X_0 X_1^s/X_2^{s+1} \,\big).
\]

\begin{rem}
The above expression may be slightly confusing.
The coordinates $(X_0,X_1,X_2)$ of $F_p$ are homogeneous coordinates in
the projective plane $\P^2_{\Q}$,
whereas the coordinates $(U,V)$ of $C_s$ are affine coordinates.
There is a difference of signs from \cite[p.\,207]{GrossRohrlich}
because the defining equation of the Fermat curve in \cite{GrossRohrlich}
is $X^p + Y^p = 1$.
\end{rem}

The morphism $\varphi_s$ induces morphisms between Jacobian varieties
\[ \varphi_{s \ast} \colon \Jac(F_p) \longrightarrow \Jac(C_s), \qquad
   \varphi_s^{\ast} \colon \Jac(C_s) \longrightarrow \Jac(F_p) \]
corresponding to the pushfoward and the pullback of divisor classes.
Let $\varphi_{\ast},\varphi^{\ast}$ be the product of $\varphi_{s \ast},\varphi_s^{\ast} \ (1 \leq s \leq p-2)$,
respectively.
Faddeev proved that the product $\varphi_{\ast}$ induces an isogeny (\cite{Faddeev}):
\[
\varphi_{\ast} \colon \Jac(F_p) \ \longrightarrow \ \prod_{1 \leq s \leq p-2} \Jac(C_s).
\]
Hence $\varphi^{\ast}$ is an isogeny from $\prod_{1 \leq s \leq p-2} \Jac(C_s)$ to $\Jac(F_p)$.

\begin{lem}
\label{Lemma:FermatJacobianIsogeny}
The composite $\varphi^{\ast} \circ \varphi_{\ast} \colon \Jac(F_p) \longrightarrow\Jac(F_p)$
is equal to the multiplication-by-$p$ isogeny.
\end{lem}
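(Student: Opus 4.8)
The plan is to realize each $\varphi_s$ as the quotient of $F_p$ by a cyclic group of automorphisms and thereby express the composite $\varphi_s^{\ast}\circ\varphi_{s\ast}$ as a sum of automorphisms acting on $\Jac(F_p)$. Concretely, the group $G := \mu_p \times \mu_p \cong (\Z/p\Z)^2$ acts on $F_p$ by $(\zeta^a,\zeta^b)\cdot(X_0:X_1:X_2) = (\zeta^a X_0 : \zeta^b X_1 : X_2)$ for a fixed primitive $p$-th root of unity $\zeta$. A direct computation from the formula for $\varphi_s$ shows that the coordinate $U = -(X_0/X_2)^p$ is fixed by all of $G$, whereas $V$ is multiplied by $\zeta^{a+bs}$. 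Hence the order-$p$ subgroup $H_s := \{(a,b) : a+bs \equiv 0 \pmod{p}\}$ fixes the pair $(U,V)$, so it acts on the fibers of $\varphi_s$; since $|H_s| = p = \deg\varphi_s$, the morphism $\varphi_s$ is Galois with group $H_s$ and $C_s \cong F_p/H_s$.

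Next I would invoke the standard pullback--pushforward formula for a Galois cover: for a point $P \in F_p$ lying outside the finite branch locus one has $\varphi_s^{\ast}\varphi_{s\ast}[P] = \sum_{h\in H_s}[hP]$, so on $\Jac(F_p)$ the composite equals $\sum_{h\in H_s}[h]$, where $[h]$ is the automorphism of $\Jac(F_p)$ induced by $h$. Summing over $s$ gives
\[ \varphi^{\ast}\circ\varphi_{\ast} = \sum_{s=1}^{p-2}\ \sum_{h\in H_s}[h]. \]

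The heart of the argument is then a counting identity over the $p+1$ subgroups of order $p$ in $G$. These are the $H_s$ for $s\in\Z/p\Z$ together with $H_\infty := \{(a,0)\}$, and they meet pairwise only in the identity, so their non-identity parts partition $G\setminus\{e\}$. For any subgroup $N$ with $F_p/N$ of genus $0$, the cover $F_p\to F_p/N$ has $\pi_{\ast}=0$ (as $\Jac$ of a genus-$0$ curve vanishes), whence $\sum_{h\in N}[h]=0$. I would observe that $H_0$, $H_{p-1}$ and $H_\infty$ have this property, since the corresponding quotient maps are the coordinate projections $(X_0:X_1:X_2)\mapsto(X_0:X_2),\,(X_0:X_1),\,(X_1:X_2)$ onto $\P^1$; likewise $F_p/G\cong\P^1$ gives $\sum_{g\in G}[g]=0$. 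Writing $\sum_{g\in G}[g]$ via the partition of $G$ yields $\sum_{N}\sum_{h\in N}[h] = p\cdot\id$, the sum running over all $p+1$ subgroups. Since the three summands attached to $H_0,H_{p-1},H_\infty$ vanish and the remaining $p-2$ subgroups are exactly $H_1,\dots,H_{p-2}$, the left-hand side is precisely $\varphi^{\ast}\circ\varphi_{\ast}$, and the identity becomes $\varphi^{\ast}\circ\varphi_{\ast} = p\cdot\id$, as claimed.

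The step requiring the most care is the justification of the Galois pullback--pushforward formula at the level of the Jacobian, namely handling ramification and passing from divisor classes to $\Jac(F_p)$, together with verifying that the three distinguished quotients really are the rational coordinate projections. Once these geometric facts are in place, the inclusion--exclusion bookkeeping over the subgroups of $G$ is purely formal.
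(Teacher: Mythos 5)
Your argument is correct and is essentially the paper's proof: both reduce $\varphi^{\ast}\circ\varphi_{\ast}$ to the sum $\sum_{s=1}^{p-2}\sum_{h\in H_s}[h]$ over the Galois groups of the $\varphi_s$ inside $(\Z/p\Z)^2$, and both conclude via the fact that the remaining order-$p$ subgroups (and the full group) have genus-zero quotients, so their norm operators annihilate $\Jac(F_p)$. The difference is only presentational: the paper encodes the same inclusion--exclusion as the group-ring identity $\varphi^{\ast}\varphi_{\ast}-p=\big(\sum_j A^j\big)\big(\sum_k B^k\big)-\sum_j A^j-\sum_k B^k-\sum_j (AB)^j$ and cites Rohrlich for the vanishing of the three norms, whereas you organize it as a partition of $G\setminus\{e\}$ by the $p+1$ cyclic subgroups and justify the vanishing directly from the rationality of the coordinate-projection quotients.
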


\begin{proof}
This result is presumably well-known (\cite[p.\ 2]{Tzermias1}).
Since it is not stated explicitly in \cite{GrossRohrlich},
we briefly sketch how to deduce it from the results in \cite{Rohrlich}, \cite{GrossRohrlich}.
It is enough to prove Lemma \ref{Lemma:FermatJacobianIsogeny}
over $\overline{\Q}$.
So we shall work over $\overline{\Q}$ in the following argument.
Let $\zeta \in \overline{\Q}$ be a primitive $p$-th root of unity.
Define the automorphisms $A,B$ of $F_p$ by
\[
  A(X_0,X_1,X_2) := (\zeta X_0,X_1,X_2), \quad
  B(X_0,X_1,X_2) := (X_0,\zeta X_1,X_2).
\]
Since $\varphi_s \colon F_p \longrightarrow C_s$ is a Galois covering
of degree $p$ whose Galois group is generated by $A^{-s} B$,
we have
\[ \varphi^{\ast} \circ \varphi_{\ast} \ = \sum_{s=1}^{p-2} \sum_{j=0}^{p-1} \,(A^{-s} B)^j \]
on $\Jac(F_p)$ (\cite[p.\,208]{GrossRohrlich}).
We have
\[
\varphi^{\ast} \circ \varphi_{\ast} - p
= \bigg( \sum_{j=0}^{p-1} A^j \bigg) \bigg( \sum_{k=0}^{p-1} B^k \bigg)
- \sum_{j=0}^{p-1} A^j
- \sum_{k=0}^{p-1} B^k
- \sum_{j=0}^{p-1} (AB)^j.
\]
on $\Jac(F_p)$.
Therefore, we have $\varphi^{\ast} \circ \varphi_{\ast} - p = 0$ on $\Jac(F_p)$
because the operators
$\sum_{j=0}^{p-1} A^j$,\ $\sum_{k=0}^{p-1} B^k$,\ $\sum_{j=0}^{p-1} (AB)^j$
annihilate $\Jac(F_p)$ by \cite[Remark in p.\,121]{Rohrlich}.
\end{proof}

By Lemma \ref{Lemma:FermatJacobianIsogeny}, we have an isomorphism
\[ \Jac(F_p)(\Q)_{p'\text{\rm -tors}}
   \ \cong \ \prod_{1 \leq s \leq p-2} \Jac(C_s)(\Q)_{p'\text{\rm -tors}}, \]
where
$\Jac(F_p)(\Q)_{p'\text{\rm -tors}}$
(resp.\ $\Jac(C_s)(\Q)_{p'\text{\rm -tors}}$)
denotes the group of $\Q$-rational torsion points on
$\Jac(F_p)$ (resp.\ $\Jac(C_s)$) whose orders are prime to $p$.

\begin{cor}
\label{Corollary:GrossRohrlich}
\[ \Jac(F_p)(\Q)_{p'\text{\rm -tors}} \cong
   \begin{cases}
     0 & p \neq 7 \\
     (\Z/2\Z)^2 & p = 7
   \end{cases} \]
\end{cor}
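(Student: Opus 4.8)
The plan is to read the result directly off Theorem~\ref{Theorem:GrossRohrlich} together with the isomorphism recorded immediately above the statement. Recall that Lemma~\ref{Lemma:FermatJacobianIsogeny} gives $\varphi^{\ast}\circ\varphi_{\ast}=p$ on $\Jac(F_p)$. Since multiplication by $p$ restricts to an automorphism of the prime-to-$p$ torsion subgroup, the isogeny $\varphi_{\ast}$ induces an isomorphism
\[ \Jac(F_p)(\Q)_{p'\text{\rm -tors}} \ \overset{\cong}{\longrightarrow} \ \prod_{1\le s\le p-2}\Jac(C_s)(\Q)_{p'\text{\rm -tors}}, \]
so it suffices to compute the prime-to-$p$ part of each factor on the right-hand side.

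First I would treat the case $p\neq 7$. Here Theorem~\ref{Theorem:GrossRohrlich} gives $\Jac(C_s)(\Q)_{\text{\rm tors}}\cong\Z/p\Z$ for every $s$ with $1\le s\le p-2$. The prime-to-$p$ part of $\Z/p\Z$ is trivial, hence each factor vanishes and the product is $0$. Next I would treat $p=7$, where the index $s$ runs over $\{1,2,3,4,5\}$. For $s=1,3,5$ one has $\Jac(C_s)(\Q)_{\text{\rm tors}}\cong\Z/7\Z$, whose prime-to-$7$ part is trivial; for $s=2,4$ one has $\Jac(C_s)(\Q)_{\text{\rm tors}}\cong\Z/7\Z\times\Z/2\Z$, whose prime-to-$7$ part is $\Z/2\Z$. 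Taking the product over the five values of $s$ then yields $(\Z/2\Z)^2$, the two copies coming from $s=2$ and $s=4$.

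There is no genuine obstacle in this argument: the substance has already been established in Lemma~\ref{Lemma:FermatJacobianIsogeny}, which reduces the computation to a product over the $\Jac(C_s)$, and in the cited work of Gross-Rohrlich, which computes each individual factor. The only point requiring care is the bookkeeping---correctly isolating the prime-to-$p$ components and keeping track of exactly which values of $s$ contribute in the $p=7$ case---and this is entirely routine.
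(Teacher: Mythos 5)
Your argument is correct and is essentially the paper's own: the authors likewise deduce the isomorphism $\Jac(F_p)(\Q)_{p'\text{-tors}} \cong \prod_{s}\Jac(C_s)(\Q)_{p'\text{-tors}}$ from Lemma~\ref{Lemma:FermatJacobianIsogeny} and then read off the answer from Theorem~\ref{Theorem:GrossRohrlich}, with the two $\Z/2\Z$ factors coming from $s=2,4$ when $p=7$. The only point stated slightly too quickly (in both your write-up and the paper's) is that $\varphi^{\ast}\circ\varphi_{\ast}=p$ directly gives injectivity of $\varphi_{\ast}$ on prime-to-$p$ torsion; surjectivity uses in addition that $\varphi^{\ast}$ has degree a power of $p$, hence is injective on prime-to-$p$ torsion, but this is routine.
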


\begin{proof}[\bf \em Proof of Theorem \ref{MainTheorem} (for $p \neq 2,7$)]
There does not exist non-trivial $\Q$-rational $2$-torsion points on 
$\Jac(F_p)$ by Corollary \ref{Corollary:GrossRohrlich}.
Hence $F_p$ does not admit a symmetric determinantal representation over $\Q$
by Corollary \ref{Corollary:OddDegreeNonExistence}.
\end{proof}

\section{The non-existence of symmetric determinantal representations of the Fermat curve of degree $7$}
\label{Section:p=7}

This case requires a little care because we have
\[ \Jac(F_7)[2](\Q) \cong (\Z/2\Z)^2 \]
by Corollary \ref{Corollary:GrossRohrlich}.
We shall prove these $2$-torsion points correspond to {\em effective} theta characteristics on $F_7$.

\begin{proof}[\bf \em Proof of Theorem \ref{MainTheorem} (for $p = 7$)]
We recall the results of Gross-Rohrlich on divisors on $F_7$ representing elements of
$\Jac(F_7)[2](\Q) \cong (\Z/2\Z)^2$.
(See \cite[p.\,209, (3)]{GrossRohrlich} for details.)

Let $\varepsilon \in \overline{\Q}$ be a primitive $14$-th root of unity,
and we put $\zeta := \varepsilon^2$.
Let $\eta \in \overline{\Q}$ be a primitive $6$-th root of unity.
Then $\eta$ is conjugate to $\eta^{-1}$ over $\Q$.
We put
\[ P := (\eta, \eta^{-1}, -1),\qquad
   Q := (\eta^{-1}, \eta, -1), \qquad
   R_j := (\varepsilon \zeta^j, 1, 0) \quad (0 \leq j \leq 6) \]
Define a divisor $D$ of degree $0$ on $F_7$ by
\[ D := \sum_{j=0}^6 \big(\, (A^3 B)^j(P) + (A^3 B)^j(Q) - 2 R_j \,\big), \]
where $A,B$ are the automorphisms of $F_7$ defined
in the proof of Lemma \ref{Lemma:FermatJacobianIsogeny}.
The divisor $D$ is invariant under the action of $\Gal(\overline{\Q}/\Q)$.
Hence the line bundle $\O_{F_7}(D)$ is defined over $\Q$.

It is straightforward to confirm that the divisor of the rational function
\[ f := X_2^{-4} \big( X_1^3 X_2 + X_0 X_2^3 - X_0^3 X_1 \big) \]
on $F_7$ is equal to $2D$. Hence $2D$ is a principal divisor, and
$D$ gives a $\Q$-rational $2$-torsion point
\[ [D] \in \Jac(F_7)[2](\Q). \]
Gross-Rohrlich proved that the divisor class $[D]$ is non-trivial,
and
\[ \Jac(F_7)[2](\Q) \ = \ \big\{ \, 0,\ [D],\ [\sigma(D)],\ [\sigma^2(D)] \, \big\}. \]
Here $\sigma \colon F_7 \longrightarrow F_7$ is the automorphism of order $3$ defined by
\[ \sigma(X_0,X_1,X_2) := (X_1,X_2,X_0). \]

Recall that $\O_{F_7}(2) := \O_{\P^2_{\Q}}(2)|_{F_7}$
is an {\em effective} theta characteristic on $F_7$.
(See the proof of Corollary \ref{Corollary:OddDegreeNonExistence}.)
Since $F_7$ has a $\Q$-rational point such as $(1, -1, 0)$,
there are exactly 4 isomorphism classes of theta characteristics on $F_7$
by Corollary \ref{Corollary:ThetaCharacteristicOrder} (2).
They are represented by the following line bundles
\[
\O_{F_7}(2),\quad
\O_{F_7}(2) \otimes \O_{F_7}(D),\quad
\O_{F_7}(2) \otimes \O_{F_7}\big( \sigma(D) \big),\quad
\O_{F_7}(2) \otimes \O_{F_7}\big( \sigma^2(D) \big).
\]

By Proposition \ref{Proposition:ExistenceCriterion},
we need to prove that all of these theta characteristics are effective.
We have already seen that $\O_{F_7}(2)$ is effective.
Since the automorphism $\sigma$ permutes
the three theta characteristics on $F_7$ except $\O_{F_7}(2)$,
it is enough to show that one of them is effective.

We shall show $\O_{F_7}(2) \otimes \O_{F_7}(D)$ is effective.
The line bundle $\O_{F_7}(2) \otimes \O_{F_7}(D)$ is effective
if and only if the divisor $D + 2 H$ is linearly equivalent
to an effective divisor,
where $H$ is a hyperplane section (i.e.\ $H$ is a divisor on $F_7$
cut out by a line in $\P^2_{\Q}$).
Consider the line at infinity $\ell_{\infty} \subset \P^2_{\Q}$ defined by $X_2 = 0$.
The line $\ell_{\infty}$ intersects with $F_7$ at the 7 points $R_0,R_1,\ldots,R_6$.
All of the intersection points have multiplicity one.
(Since $F_7 \subset \P^2_{\Q}$ is a plane curve of degree $7$,
there are exactly $7$ intersections counted with their multiplicities
by B\'ezout's theorem (\cite[I., 7.8]{Hartshorne}, \cite[Corollary 9.1.20]{Liu}).)
The divisor $\ell_{\infty} \cap F_7$ on $F_7$ cut out by $\ell_{\infty}$ is
equal to $\sum_{j=0}^{6} R_j$.
Since
\[ D + 2(\ell_{\infty} \cap F_7) = \sum_{j=0}^6 \big(\, (A^3 B)^j(P) + (A^3 B)^j(Q) \,\big) \]
is effective,
we conclude that $\O_{F_7}(2) \otimes \O_{F_7}(D)$ is an effective
theta characteristic on $F_7$.

The proof of Theorem \ref{MainTheorem} for $p=7$ is complete.
\end{proof}

\section{Symmetric determinantal representations of the Klein quartic over $\Q$}
\label{Section:ProofKlein}

The Klein quartic is a smooth plane curve of degree $4$ over $\Q$
defined by the equation
\[ C_{\mathrm{Kl}} := \big( X_0^3 X_1 + X_1^3 X_2 + X_2^3 X_0 = 0 \big) \subset \P^2_{\Q}. \]
In this section, we study symmetric determinantal representations of
$C_{\mathrm{Kl}}$ over $\Q$.
For an excellent exposition of the arithmetic and the geometry of the Klein quartic, see \cite{Elkies}.

The following arithmetic results on the Klein quartic must be well-known.

\begin{lem}
\label{Lemma:KleinQuartic}
\begin{enumerate}
\item $\Jac(C_{\mathrm{Kl}})[2](\Q) \cong \Z/2\Z$.
\item There is an effective theta characteristic on $C_{\mathrm{Kl}}$ defined over $\Q$.
\end{enumerate}
\end{lem}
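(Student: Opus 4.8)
The plan is to establish the two assertions separately, each by combining one explicit geometric input with the arithmetic of Gross--Rohrlich recalled above.

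For part (1), I would use the classical fact (see \cite[p.\,67]{Elkies}) that $C_{\mathrm{Kl}}$ is isomorphic over $\Q$ to the smooth projective model $C_2$ (equivalently $C_4$) of the Gross--Rohrlich curve $V^7 = U(1 - U)^2$ for $p = 7$. Granting this isomorphism over $\Q$, it induces an isomorphism of Jacobians $\Jac(C_{\mathrm{Kl}}) \cong \Jac(C_2)$ as abelian varieties over $\Q$, and hence an isomorphism of their groups of $\Q$-rational $2$-torsion points. By Theorem \ref{Theorem:GrossRohrlich} applied to $(p,s) = (7,2)$ we have $\Jac(C_2)(\Q)_{\mathrm{tors}} \cong \Z/7\Z \times \Z/2\Z$; since every $2$-torsion point is a torsion point, the subgroup $\Jac(C_2)[2](\Q)$ is the $2$-torsion of this finite group, namely $\Z/2\Z$. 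This yields $\Jac(C_{\mathrm{Kl}})[2](\Q) \cong \Z/2\Z$.

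For part (2), I would produce the effective theta characteristic from a bitangent line defined over $\Q$, following the idea in the proof of Corollary \ref{Corollary:OddDegreeNonExistence}. Since $C_{\mathrm{Kl}}$ is a smooth plane quartic, its canonical sheaf is $\Omega^1_{C_{\mathrm{Kl}}} \cong \O_{C_{\mathrm{Kl}}}(1)$. Substituting $X_2 = -X_0 - X_1$ into the defining quartic, a short computation shows that the restriction of $X_0^3 X_1 + X_1^3 X_2 + X_2^3 X_0$ to the line $\ell \colon X_0 + X_1 + X_2 = 0$ equals $-(X_0^2 + X_0 X_1 + X_1^2)^2$, a perfect square whose quadratic factor has distinct roots. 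Hence $\ell$ is a genuine bitangent, meeting $C_{\mathrm{Kl}}$ in a divisor $2(P + Q)$, where $P, Q$ are the two distinct tangency points; they are defined over $\Q(\sqrt{-3})$ and interchanged by $\Gal(\Q(\sqrt{-3})/\Q)$, so the divisor $P + Q$ is $\Q$-rational and effective. As $C_{\mathrm{Kl}}$ has $\Q$-rational points (for instance $(1,0,0)$), we have $\Pic(C_{\mathrm{Kl}}) = \Pic_{C_{\mathrm{Kl}}/\Q}(\Q)$, so $\O_{C_{\mathrm{Kl}}}(P + Q)$ is a line bundle defined over $\Q$. It satisfies $\O_{C_{\mathrm{Kl}}}(P + Q)^{\otimes 2} \cong \O_{C_{\mathrm{Kl}}}(2P + 2Q) \cong \O_{C_{\mathrm{Kl}}}(1) \cong \Omega^1_{C_{\mathrm{Kl}}}$, so it is a theta characteristic, and it is effective because $P + Q$ is an effective divisor.

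The step I expect to be the main obstacle is the first one in part (1): ensuring that the isomorphism $C_{\mathrm{Kl}} \cong C_2$ is defined over $\Q$, not merely over $\overline{\Q}$. The existence of a $\overline{\Q}$-isomorphism is classical, but transferring the Gross--Rohrlich computation of \emph{$\Q$-rational} torsion genuinely requires the identification of Jacobians to descend to $\Q$; an isomorphism defined only over an extension could in principle enlarge the group of rational $2$-torsion and invalidate the comparison. I would therefore take care to exhibit the birational map by an explicit coordinate change with $\Q$-rational coefficients (or, alternatively, realize both curves as $\Q$-rational quotients of $F_7$ and match them directly). Once this is secured, parts (1) and (2) combine to give the lemma, with part (2) being comparatively routine.
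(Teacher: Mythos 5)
Your proposal is correct and follows essentially the same route as the paper: part (1) via the explicit $\Q$-rational birational map identifying $C_{\mathrm{Kl}}$ with the Gross--Rohrlich curve $C_2$ (the subtlety about the isomorphism being defined over $\Q$, which you rightly flag, is handled in the paper exactly as you propose, by exhibiting the coordinate change with rational coefficients), and part (2) via the bitangent $X_0+X_1+X_2=0$ with tangency points $(\zeta_3,\zeta_3^2,1)$ and $(\zeta_3^2,\zeta_3,1)$ giving the Galois-stable effective divisor $P+Q$ with $\O_{C_{\mathrm{Kl}}}(2P+2Q)\cong\O_{C_{\mathrm{Kl}}}(1)\cong\Omega^1_{C_{\mathrm{Kl}}}$.
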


\begin{proof}
(1) \ It seems possible to work with the defining equation of $C_{\mathrm{Kl}}$ directly.
Here we shall deduce it from the results of Gross-Rohrlich (\cite{GrossRohrlich}).
The Klein quartic $C_{\mathrm{Kl}}$ is isomorphic to
the projective smooth model $C_2$ of the affine curve $V^7 = U (1-U)^2$ (\cite[p.\,67]{Elkies}).
This can be seen as follows.
We have a rational map
\[ C_{\mathrm{Kl}} \dashrightarrow C_2,\qquad (a,b,c) \mapsto (s,t) := (-a^2 b/c^3,\,-b/c) \]
Since $t^7 = s(1-s)^2$ is satisfied, $(s,t)$ lies on the affine curve $V^7 = U (1-U)^2$.
Conversely, since $c/b = -1/t,\ a/b = t^2/(s-1)$,
the homogeneous coordinates of $(a,b,c)$ are recovered from $(s,t)$ when
$s \neq 1,\ t \neq 0$.
Hence $C_{\mathrm{Kl}}$ is birational to $C_2$ over $\Q$.
Since $C_{\mathrm{Kl}}$, $C_2$ are projective smooth curves over $\Q$,
they are isomorphic over $\Q$
(\cite[I., 6.12]{Hartshorne}, \cite[Proposition 7.3.13 (b)]{Liu}).
Hence we have $C_{\mathrm{Kl}} \cong C_2$, and
\[ \Jac(C_{\mathrm{Kl}})[2](\Q) \cong \Jac(C_2)[2](\Q) \cong \Z/2\Z \]
by Theorem \ref{Theorem:GrossRohrlich}.

\vspace{2mm}

\noindent
(2) \ Let $\zeta_3 \in \overline{\Q}$ be a primitive cube root of unity.
Consider $\overline{\Q}$-rational points
$P := (\zeta_3, \zeta_3^2, 1)$ and $Q := (\zeta_3^2, \zeta_3, 1)$ on $C_{\mathrm{Kl}}$.
The line
\[ X_0 + X_1 + X_2 = 0 \]
is a bitangent to $C_{\mathrm{Kl}}$ at $P,Q$.
Since $P + Q$ is a $\Gal(\overline{\Q}/\Q)$-invariant divisor on $C_{\mathrm{Kl}} \otimes_{\Q} \overline{\Q}$,
the line bundle $\O_{C_{\mathrm{Kl}}}(P+Q)$ is defined over $\Q$.
The canonical sheaf $\Omega^1_{C_{\mathrm{Kl}}}$ is isomorphic to
the restriction $\O_{C_{\mathrm{Kl}}}(1) := \O_{\P^2_{\Q}}(1)|_{C_{\mathrm{Kl}}}$.
Since the divisor $2P + 2Q$ on $C_{\mathrm{Kl}}$ is cut out by a line,
we have $\O_{C_{\mathrm{Kl}}}(2P+2Q) \cong \O_{C_{\mathrm{Kl}}}(1)$.
Hence $\O_{C_{\mathrm{Kl}}}(P+Q)$ is an effective theta characteristic on $C_{\mathrm{Kl}}$
defined over $\Q$.
\end{proof}

\begin{rem}
There are 28 bitangents to a smooth plane quartic defined over
an algebraically closed field of characteristic different from two.
There is a bijection between bitangents
and isomorphism classes of effective theta characteristics (\cite[Ch.\ 6]{Dolgachev}).
The defining equations of the 28 bitangents to
the Klein quartic $C_{\mathrm{Kl}} \otimes_{\Q} \overline{\Q}$
over $\overline{\Q}$ can be found in \cite[Proposition 9]{Shioda}.
In fact, Shioda calculated the defining equations over any algebraically
closed fields of characteristic different from $7$.
One can verify that $X_0 + X_1 + X_2 = 0$
is a unique bitangent defined over $\Q$.
\end{rem}

\begin{proof}[\bf \em Proof of Theorem \ref{MainTheoremKlein}]
Since $C_{\mathrm{Kl}}$ has a $\Q$-rational point such as $(1, 0, 0)$,
there are exactly two theta characteristics on $C_{\mathrm{Kl}}$, up to isomorphism,
by Corollary \ref{Corollary:ThetaCharacteristicOrder} (2) and Lemma \ref{Lemma:KleinQuartic}.
Since $C_{\mathrm{Kl}}$ admits a symmetric determinantal representation over $\Q$
(cf.\ the equation (\ref{Equation:KleinQuartic}) in Introduction),
there is a {\em non-effective} theta characteristic on $C_{\mathrm{Kl}}$
by Proposition \ref{Proposition:ExistenceCriterion}.
On the other hand, there is
an {\em effective} theta characteristic on $C_{\mathrm{Kl}}$ by Lemma \ref{Lemma:KleinQuartic} (2).
Therefore, there is a {\em unique} isomorphism class of
non-effective theta characteristics on $C_{\mathrm{Kl}}$ over $\Q$.
The assertion of Theorem \ref{MainTheoremKlein}
follows from Proposition \ref{Proposition:ExistenceCriterion}.
\end{proof}

% m:matrix([a,0,0,b],[0,b,0,c],[0,0,c,a],[b,c,a,0]);
% ratsimp(determinant(m));

\subsection*{Acknowledgements}

The work of the first author was supported by JSPS KAKENHI Grant Number 13J01450.
The work of the second author 
was supported by JSPS KAKENHI Grant Number 20674001 and 26800013.
The authors would like to thank an anonymous referee
for simplifying the proof of Lemma 3.3.


\begin{thebibliography}{ZZZ}
\bibitem{BeauvillePrym}
  Beauville, A.,
  \textit{Vari\'et\'es de Prym et jacobiennes interm\'ediaires},
  Ann.\ Sci.\ \'Ecole Norm.\ Sup.\ (4) 10 (1977), no.\ 3, 309-391.
\bibitem{BeauvilleDeterminantal}
  Beauville, A., \textit{Determinantal hypersurfaces},
  Michigan Math.\ J.\ 48 (2000), 39-64.
\bibitem{BhargavaGrossWang}
  Bhargava, M., Gross, B.\ H., Wang, X.,
  \textit{Arithmetic invariant theory II},
  to appear in Progress in Mathematics, Representations of Lie Groups,
  in Honor of David A. Vogan, Jr.\ on his 60th birthday,
  \texttt{arXiv:1310.7689}
\bibitem{BoschLuetkebohmertRaynaud}
  Bosch, S., L\"utkebohmert, W., Raynaud, M., \textit{N\'eron models},
  Ergebnisse der Mathematik und ihrer Grenzgebiete (3), 21. Springer-Verlag, Berlin, 1990.
\bibitem{CookThomas}
  Cook, R.\ J., Thomas, A.\ D., \textit{Line bundles and homogeneous matrices},
  Quart.\ J.\ Math.\ Oxford Ser.\ (2) 30 (1979), no.\ 120, 423-429.
\bibitem{Dixon}
  Dixon, A.\ C., \textit{Note on the reduction of a ternary quantic to a symmetric determinant},
  Proc.\ Cambridge Phil.\ Soc.\ 11 (1902), 350-351.
\bibitem{Dolgachev}
  Dolgachev, I.\ V., \textit{Classical algebraic geometry - A modern view},
  Cambridge University Press, Cambridge, 2012.
\bibitem{Edge1}
  Edge, W.\ L., \textit{Determinantal representations of $x^4 + y^4 + z^4$},
  Math.\ Proc.\ Cambridge Phil.\ Soc.\ 34 (1938), 6-21.
\bibitem{Edge2}
  Edge, W.\ L.. \textit{The Klein group in three dimensions}, Acta Math.\ 79, (1947), 153-223.
\bibitem{Elkies}
  Elkies, N., \textit{The Klein quartic in number theory},
  The eightfold way, 51-101, Math.\ Sci.\ Res.\ Inst.\ Publ., 35, Cambridge Univ.\ Press, Cambridge, 1999.
\bibitem{Faddeev}
  Faddeev, D.\ K., \textit{Group of divisor classes on the curve defined by the equation $x^4 + y^4 = 1$},
  Dokl.\ Akad.\ Nauk SSSR 134 776-777 (Russian) (translated as Soviet Math.\ Dokl.\ 1 (1960) 1149-1151).
\bibitem{GrossRohrlich}
  Gross, B.\ H., Rohrlich, D.\ E.,
  \textit{Some results on the Mordell-Weil group of the Jacobian of the Fermat curve},
  Invent.\ Math.\ 44 (1978), no.\ 3, 201-224.
\bibitem{Hartshorne}
  Hartshorne, R., \textit{Algebraic geometry}, Graduate Texts in Mathematics, No.\ 52.,
  Springer-Verlag, New York-Heidelberg, 1977.
\bibitem{Ho}
  Ho, W., \textit{Orbit parametrizations of curves},
  Ph.D\ Thesis, Princeton University, 2009.
\bibitem{Ishitsuka}
  Ishitsuka, Y., \textit{Orbit parametrizations of theta characteristics on hypersurfaces over arbitrary fields},
  preprint, \texttt{arXiv:1412.6978}
\bibitem{IshitsukaIto1}
  Ishitsuka, Y., Ito, T., \textit{The local-global principle for symmetric determinantal
  representations of smooth plane curves}, preprint, \texttt{arXiv:1412.8336}
\bibitem{IshitsukaIto2}
  Ishitsuka, Y., Ito, T., \textit{The local-global principle for symmetric determinantal representations of smooth plane curves in characteristic two}, preprint, \texttt{arXiv:1412.8343}
\bibitem{KleimanPicard}
  Kleiman, S.\ L., \textit{The Picard scheme}, Fundamental algebraic geometry, 235-321,
  Math.\ Surveys Monogr., 123, Amer.\ Math.\ Soc., Providence, RI, 2005.
\bibitem{Liu}
  Liu, Q., \textit{Algebraic geometry and arithmetic curves},
  Translated from the French by Reinie Ern\'e.\ Oxford Graduate Texts in Mathematics, 6.\ Oxford Science Publications.\ Oxford University Press, Oxford, 2002.
\bibitem{MumfordTheta}
  Mumford, D., \textit{Theta characteristics of an algebraic curve},
  Ann.\ Sci.\ \'Ecole Norm.\ Sup.\ (4) 4 (1971), 181-192.
\bibitem{Piontkowski}
  Piontkowski, J., \textit{Theta-characteristics on singular curves},
  J.\ Lond.\ Math.\ Soc.\ (2) 75 (2007), no.\ 2, 479-494.
\bibitem{Rohrlich}
  Rohrlich, D.\ E.,
  \textit{Points at infinity on the Fermat curves},
  Invent.\ Math.\ 39 (1977), no.\ 2, 95-127.
\bibitem{Shioda}
  Shioda, T., \textit{Plane quartics and Mordell-Weil lattices of type $E_7$},
  Comment.\ Math.\ Univ.\ St.\ Paul.\ 42 (1993), no.\ 1, 61-79.
\bibitem{Tzermias1}
  Tzermias, P., \textit{Torsion in Mordell-Weil groups of Fermat Jacobians},
  Compositio Math.\ 106 (1997), no.\ 1, 1-9.
\bibitem{Tzermias2}
  Tzermias, P., \textit{Mordell-Weil groups of the Jacobian of the 5th Fermat curve},
  Proc.\ Amer.\ Math.\ Soc.\ 125 (1997), no.\ 3, 663-668.
\bibitem{Tzermias3}
  Tzermias, P., \textit{Torsion parts of Mordell-Weil groups of Fermat Jacobians},
  Internat.\ Math.\ Res.\ Notices (1998), no.\ 7, 359-369.
\bibitem{Tzermias4}
  Tzermias, P., \textit{Algebraic points of low degree on the Fermat curve of degree seven},
  Manuscripta Math.\ 97 (1998), no.\ 4, 483-488.
\bibitem{Vinnikov}
  Vinnikov, V., \textit{Complete description of determinantal representations of smooth irreducible curves},
  Linear Algebra Appl.\ 125 (1989), 103-140.
\end{thebibliography}
\end{document}